\newtheorem{theorem}{Theorem}[section]
\newtheorem{corollary}[theorem]{Corollary}
\theoremstyle{definition}
\theoremstyle{remark}
\newtheorem{remark}[theorem]{Remark}
\numberwithin{equation}{section}
\newcommand{\1}{\mathbb 1}
\newcommand{\R}{\mathbb R}
\newcommand{\be}{\begin{equation}}
\newcommand{\ee}{\end{equation}}
\DeclareFontFamily{U}{mathx}{\hyphenchar\font45}
\DeclareFontShape{U}{mathx}{m}{n}{
      <5> <6> <7> <8> <9> <10>
      <10.95> <12> <14.4> <17.28> <20.74> <24.88>
      mathx10
      }{}
\DeclareSymbolFont{mathx}{U}{mathx}{m}{n}
\DeclareMathAccent{\widecheck}{0}{mathx}{"71}
\DeclareMathAccent{\wideparen}{0}{mathx}{"75}
\newcommand{\tria}{{\mathcal T}}
\newcommand{\bbT}{\mathbb{T}}
\newcommand{\cP}{\mathcal{P}}
\newcommand{\identity}{\mathrm{Id}}
\newcommand{\Ss}{\mathscr{S}}
\newcommand{\Y}{\mathscr{Y}}
\newcommand{\ZZ}{\mathscr{Z}}
\DeclareMathOperator{\ran}{ran}
\DeclareMathOperator{\supp}{supp}
\DeclareMathOperator{\diag}{diag}
\newcommand{\cL}{\mathcal L}
\newcommand{\Lis}{\cL\mathrm{is}}
\newcommand{\cLis}{\cL\mathrm{is}_c}
\title{Operator preconditioning: the simplest case}
\date{\today}
\author{Rob Stevenson, Raymond van Veneti\"{e}}
\address{
Korteweg-de Vries Institute for Mathematics,
University of Amsterdam,
P.O. Box 94248,
1090 GE Amsterdam, The Netherlands}
\email{r.p.stevenson@uva.nl, r.vanvenetie@uva.nl}
\thanks{ 
The second author has been supported by the Netherlands Organization for Scientific Research
(NWO) under contract. no. 613.001.652}
\subjclass[2010]{
65F08, 
65N38, 
65N30, 
45Exx. 
}
\keywords{Operator preconditioning,  uniform preconditioners, finite- and boundary elements}
\begin{document}
\begin{abstract}
    Using the framework of operator or Calder\'{o}n preconditioning, uniform preconditioners are constructed for elliptic operators discretized with continuous finite (or boundary) elements.
The preconditioners are constructed as the composition of an opposite order operator, discretized on the same ansatz space, and two
diagonal scaling operators.
\end{abstract}

\maketitle

\section{Introduction}
This paper deals with the construction of uniform preconditioners for negative and positive order operators, discretized by continuous piecewise polynomial trial spaces,
using the framework of `operator preconditioning'~\cite{138.26}, see also~\cite{249.15, 249.16,35.8655,138.30}.

For some $d$-dimensional closed domain (or manifold) $\Omega$ and an $s \in [0,1]$,
we consider the (fractional) Sobolev space $H^s(\Omega)$ and its dual that we denote by $H^{-s}(\Omega)$.
Let $(\Ss_\tria)_{\tria \in \bbT}$ be a family of \emph{continuous piecewise} polynomials of some fixed degree $\ell$ w.r.t.~uniformly shape regular, possibly locally refined, partitions.

Given some families of uniformly boundedly invertible operators
\begin{align*}
    A_\tria &\colon \big(\Ss_\tria, \|\cdot\|_{H^{-s}(\Omega)}\big) \to \big(\Ss_\tria, \|\cdot\|_{H^{-s}(\Omega)}\big)',\\
    B_\tria &\colon \big(\Ss_\tria, \|\cdot\|_{H^{s}(\Omega)}\big) \to  \big(\Ss_\tria, \|\cdot\|_{H^{s}(\Omega)} \big)',
\end{align*}
we are interested in constructing a \emph{preconditioner} for $A_\tria$ using operator preconditioning with $B_\tria$, \emph{and} vice versa.
To this end, we introduce a uniformly boundedly invertible operator $D_\tria \colon \big(\Ss_\tria, \|\cdot\|_{H^{-s}(\Omega)}\big) \to \big(\Ss_\tria, \|\cdot\|_{H^{s}(\Omega)}\big)'$,
yielding preconditioned systems $D_\tria^{-1}B_\tria (D_\tria')^{-1}A_\tria$ and $(D_\tria')^{-1}A_\tria D_\tria^{-1}B_\tria$ that are uniformly boundedly invertible.

In earlier research,~\cite{249.97,249.975}, we already constructed such preconditioners in a more general setting where \emph{different} ansatz spaces were used to define
$A_\tria$ and $B_\tria$.
The setting studied in the current work, however, allows for  preconditioners with a remarkably simple implementation.

A typical setting is that for some $A \colon H^{s}(\Omega) \to H^{-s}(\Omega)$ and $B \colon H^{-s}(\Omega) \to H^s(\Omega)$, both
boundedly invertible and coercive, it holds that $(A_\tria u)(v) := (Au)(v)$ and $(B_\tria u)(v) := (B u)(v)$ with $u,v \in \Ss_\tria$. An example for $s = \frac12$ is that $A$ is the Single Layer Integral
operator and $B$ is the Hypersingular Integral operator.
For this case, continuity of piecewise polynomial trial functions is required for discretizing $B$, but not for $A$, for which often discontinuous piecewise polynomials are employed. Nevertheless, when the solution of the Single Layer Integral equation is expected to be smooth, e.g., when $\Omega$ is a smooth manifold, then it is advantageous to take an ansatz space of continuous (or even smoother) functions also for $A$.

An obvious choice for $D_\tria$ would be to consider $(D_\tria u)(v) := \langle u, v\rangle_{L_2(\Omega)}$.
However, a problem becomes apparent when one considers the matrix representation $\bm{D}_\tria$ of $D_\tria$ in the standard basis: the inverse matrix $\bm{D}_\tria^{-1}$, that appears in
the preconditioned system, is densely populated. In view of application cost, this inverse matrix has to be approximated,
where it generally can be expected that, in order to obtain a uniform preconditioner, approximation errors have to decrease with a decreasing (minimal) mesh size, which will be confirmed in a numerical experiment.
To circumvent this issue, we will introduce a $D_\tria$ that has a \emph{diagonal} matrix representation, so that its inverse can be exactly evaluated.

\subsection{Notation}
In this work, by $\lambda \lesssim \mu$ we mean that $\lambda$ can be bounded by a multiple of $\mu$, independently of parameters which $\lambda$ and $\mu$ may depend on, with the sole exception of the space dimension $d$, or in the manifold case, on the parametrization of the manifold that is used to define the finite element spaces on it. Obviously, $\lambda \gtrsim \mu$ is defined as $\mu \lesssim \lambda$, and $\lambda\eqsim \mu$ as $\lambda\lesssim \mu$ and $\lambda \gtrsim \mu$.

For normed linear spaces $\Y$ and $\ZZ$, in this paper for convenience over $\R$, $\cL(\Y,\ZZ)$ will denote the space of bounded linear mappings $\Y \rightarrow \ZZ$ endowed with the operator norm $\|\cdot\|_{\cL(\Y,\ZZ)}$. The subset of invertible operators in $\cL(\Y,\ZZ)$  with inverses in $\cL(\ZZ,\Y)$
will be denoted as $\Lis(\Y,\ZZ)$.

For $\Y$ a reflexive Banach space and $C \in \cL(\Y,\Y')$ being \emph{coercive}, i.e.,
\[
\inf_{0 \neq y \in \Y} \frac{(Cy)(y)}{\|y\|^2_\Y} >0,
\]
both $C$ and $\Re(C)\!:= \!\frac{1}{2}(C+C')$ are in $\Lis(\Y,\Y')$ with
\begin{align*}
\|\Re(C)\|_{\cL(\Y,\Y')} &\leq \|C\|_{\cL(\Y,\Y')},\\
\|C^{-1}\|_{\cL(\Y',\Y)} & \leq \|\Re(C)^{-1}\|_{\cL(\Y',\Y)}=\Big(\inf_{0 \neq y \in \Y} \frac{(Cy)(y)}{\|y\|^2_\Y}\Big)^{-1}.
\end{align*}
The set of coercive $C \in \Lis(\Y,\Y')$ is denoted as $\cLis(\Y,\Y')$.
If $C   \in \cLis(\Y,\Y')$, then $C^{-1} \in \cLis(\Y',\Y)$ and $\|\Re(C^{-1})^{-1}\|_{\cL(\Y,\Y')} \leq \|C\|_{\cL(\Y,\Y')}^2 \|\Re(C)^{-1}\|_{\cL(\Y',\Y)}$.

Given a family of operators $C_i \in \Lis(\Y_i,\ZZ_i)$ ($\cLis(\Y_i,\ZZ_i)$), we will write $C_i \in \Lis(\Y_i,\ZZ_i)$ ($\cLis(\Y_i,\ZZ_i)$) uniformly in $i$, or simply `uniform', when
    \[
    \sup_{i} \max(\|C_i\|_{\cL(\Y_i,\ZZ_i)},\|C_i^{-1}\|_{\cL(\ZZ_i,\Y_i)})<\infty,
     \]
or
    \[
    \sup_{i} \max(\|C_i\|_{\cL(\Y_i,\ZZ_i)},\|\Re(C_i)^{-1}\|_{\cL(\ZZ_i,\Y_i)})<\infty.
     \]

\section{Construction of $D_\tria$ in the domain case}
For some $d$-dimensional domain $\Omega$ and an $s \in [0,1]$,
we consider the Sobolev spaces
\[
    H^s(\Omega):=[L_2(\Omega),H^1(\Omega)]_{s,2},\quad H^{-s}(\Omega):=H^s(\Omega)',
\]
which form the Gelfand triple $H^{s}(\Omega) \hookrightarrow L_2(\Omega) \simeq L_2(\Omega)' \hookrightarrow H^{-s}(\Omega)$.
\begin{remark}
In this work, for convenience we restrict ourselves to Sobolev spaces with positive smoothness index which do not incorporate homogeneous Dirichlet boundary conditions and their duals.
The proofs given below can however be extended to the setting with boundary conditions, see the arguments found in~\cite{249.97,249.975}.
\end{remark}

Let $(\tria)_{\tria \in \bbT}$ be a family of \emph{conforming} partitions of $\Omega$ into (open) \emph{uniformly shape regular} $d$-simplices.
Thanks to the conformity and the uniform shape regularity, for $d>1$ we know that neighbouring $T,T' \in \tria$, i.e.~$\overline{T} \cap \overline{T'} \neq \emptyset$, have uniformly comparable sizes.  For $d=1$, we impose this uniform `\emph{$K$-mesh property}' explicitly.

Fix $\ell > 0$. For $\tria \in \bbT$, let $\Ss_{\tria}$ denote the space of continuous piecewise polynomials of degree $\ell$ w.r.t.~$\tria$, i.e.,
\[
    \Ss_{\tria}:=\{u \in H^1(\Omega): u|_T \in \cP_\ell \,(T \in \tria)\}.
\]
Additionally, for $r \in [-1,1]$, we will write $\Ss_{\tria\!, r}$ as shorthand notation for the normed linear space $\big(\Ss_\tria, \| \cdot \|_{H^r(\Omega)}\big)$.

Denote $N_\tria$ for the set of the usual Lagrange evaluation points of $\Ss_{\tria}$, and equip the latter space with $\Phi_\tria = \{\phi_{\tria\!, \nu} : \nu \in N_\tria\}$, being the canonical nodal basis defined by $\phi_{\tria\!,\nu}(\nu') := \delta_{\nu\nu'}$ ($\nu, \nu' \in N_\tria$).
For $T \in \tria$, set $h_T := |T|^{1/d}$ and let $N_T := \overline{T} \cap N_\tria$ be the set of evaluation points in $\overline{T}$. We will omit notational dependence on $\tria$ if it is clear from the context, e.g., we will simply write $\phi_\nu$.

\subsection{Operator preconditioning}
Given some family of opposite order operators $A_\tria \in \cLis(\Ss_{\tria\!, -s}, (\Ss_{\tria\!, -s})')$ and $B_\tria \in \cLis(\Ss_{\tria\!, s}, (\Ss_{\tria\!, s})')$, both uniformly in $\tria \in \bbT$, we are interested in constructing \emph{optimal} preconditioners for both $A_\tria$  and $B_\tria$, using the idea of opposite order preconditioning (\cite{138.26}).

That is, if one has an additional family of operators $D_\tria \in \Lis(\Ss_{\tria\!, -s}, (\Ss_{\tria\!, s})')$ uniformly in $\tria \in \bbT$, then uniformly preconditioned systems for
$A_\tria$ and $B_\tria$ are given by
\begin{equation}\label{eq:precondcont}
\begin{aligned}
    D_\tria^{-1}B_\tria (D_\tria')^{-1}A_\tria \in &\Lis(\Ss_{\tria\!, -s}, \Ss_{\tria\!, -s}), \\
   (D_\tria')^{-1}A_\tria D_\tria^{-1} B_\tria \in &\Lis(\Ss_{\tria\!, s}, \Ss_{\tria\!, s}),
\end{aligned}
\end{equation}
see the following diagram:
\[
  \begin{tikzcd}
      \Ss_{\tria\!,-s}  \arrow{r}{A_\tria} & (\Ss_{\tria\!, -s})' \arrow{d}{(D_\tria')^{-1}} \\
      (\Ss_{\tria\!, s})' \arrow{u}{D_\tria^{-1}}  & \Ss_{\tria\!, s} \arrow{l}{B_\tria}
  \end{tikzcd}.
\]
In the following we shall be concerned with constructing a suitable family $D_\tria$.

\subsubsection{An obvious but unsatisfactory choice for $D_\tria$}\label{sec:mass}
An option would be to consider $(D_\tria u)(v) := \langle u, v \rangle_{L_2(\Omega)}$ $(u,v \in \Ss_\tria)$, being uniformly in $\cL(\Ss_{\tria\!, -s}, (\Ss_{\tria\!, s})')$.
For showing boundedness of its inverse, let $Q_\tria$ be the $L_2(\Omega)$-orthogonal
projector onto $\Ss_\tria$ then
\begin{align*}
    \| D_\tria^{-1}\|^{-1}_{\cL((\Ss_{\tria\!, s})', \Ss_{\tria\!, -s})}&= \inf_{0 \ne u \in \Ss_{\tria\!, -s}}\sup_{0 \ne v \in H^{s}(\Omega)} \frac{\langle u, v \rangle_{L_2(\Omega)}}{\|u\|_{H^{-s}(\Omega)} \|Q_\tria v\|_{H^s(\Omega)}} \\
                            &\geq \|Q_\tria\|^{-1}_{\cL(H^s(\Omega), H^s(\Omega))},
\end{align*}
As follows from~\cite[Prop.~2.3]{249.97}, the converse is also true, i.e., uniform boundedness of $\| D_\tria^{-1}\|_{\cL((\Ss_{\tria\!, s})', \Ss_{\tria\!, -s})}$ is actually \emph{equivalent} to uniform boundedness of $\| Q_\tria\|_{\cL(H^s(\Omega), H^s(\Omega))}$.

This uniform boundedness of $\| Q_\tria\|_{\cL(H^s(\Omega), H^s(\Omega))}$ is well-known for families of \emph{quasi-uniform}, uniformly shape regular conforming partitions of $\Omega$ into say $d$-simplices.
It has also been demonstrated for families of locally refined partitions, for $d = 2$ including those that are generated by the newest vertex bisection (NVB) algorithm, see~\cite{37.2, 75.3675, 64.595}.
On the other hand, in~\cite{19.26} a one-dimensional counterexample was presented in which the  $L_2(\Omega)$-orthogonal
projector on a family of sufficiently strongly graded, although uniform $K$ meshes, is not $H^1(\Omega)$-stable.
Thus, in any case uniform $H^1(\Omega)$-stability cannot hold without any restrictions on the grading of the meshes.

Aside from this latter theoretical shortcoming, more importantly, there is a computational problem with the current choice of $D_\tria$. The matrix representation of $D_\tria$ w.r.t.~$\Phi_\tria$ is the `mass matrix' $\bm{D}_\tria := \langle \Phi_\tria, \Phi_\tria \rangle_{L_2(\Omega)}$.
Its inverse $\bm{D}_\tria^{-1}$, appearing in the preconditioner, is densely populated, and therefore has to be approximated,
where generally the error in such approximations has to decrease with a decreasing (minimal) mesh-size in order to arrive at a uniform preconditioner.

\subsection{Constructing a practical $D_\tria$}\label{sec:lumping}
To avoid the aforementioned problems, we shall construct $D_\tria \in \Lis(\Ss_{\tria\!, -s}, (\Ss_{\tria\!, s})')$ with a diagonal matrix representation. To this end, we require
some auxiliary space $\widetilde{\Ss}_{\tria} \subset H^1(\Omega)$ equipped with a local basis $\widetilde{\Phi}_\tria$ that is $L_2(\Omega)$-biorthogonal to $\Phi_\tria$ and that has `approximation properties'.
To be precise, let
$\widetilde{\Phi}_\tria := \{ \widetilde{\phi}_\nu \in H^1(\Omega) : \nu \in N_\tria\}$ be some collection that satisfies:
\begin{equation}\label{properties}
    \begin{aligned}
        \langle \widetilde{\phi}_\nu, \phi_{\nu'}\rangle_{L_2(\Omega)}  = \delta_{\nu \nu'} \langle \1, \phi_\nu \rangle_{L_2(\Omega)},\quad &\sum_{\nu \in N_\tria} \widetilde{\phi}_\nu = \1_{\Omega}, \\
        \| \widetilde{\phi}_\nu\|_{H^k(\Omega)} \lesssim \|\phi_\nu\|_{H^k(\Omega)} \quad\big(k \in \{0,1\}\big), \quad &\supp \widetilde{\phi}_\nu \subseteq \supp \phi_\nu.\footnotemark
\end{aligned}
\end{equation}
\footnotetext{This last condition can be replaced by $\widetilde{\phi}_\nu$ having (uniformly) local support.}%
We will take $D_\tria := I_\tria' \widetilde{D}_\tria$ with $\widetilde{D}_\tria$ and $I_\tria$ being defined and analyzed in the next two theorems.

\begin{theorem}\label{thm:fortin}
    The operator $\widetilde{D}_\tria \colon \Ss_{\tria\!, -s} \to (\widetilde{\Ss}_{\tria\!, s})'$, defined by $(\widetilde{D}_\tria u)(v) := \langle u, v\rangle_{L_2(\Omega)}$,
    satisfies $\widetilde{D}_\tria \in \Lis(\Ss_{\tria\!, -s}, (\widetilde{\Ss}_{\tria\!, s})')$ uniformly in $\tria \in \bbT$.
\end{theorem}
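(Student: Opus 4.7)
The plan is the following. Uniform boundedness of $\widetilde{D}_\tria$ is essentially for free: for $u \in \Ss_\tria \subset L_2(\Omega) \hookrightarrow H^{-s}(\Omega)$ and $v \in \widetilde{\Ss}_\tria \subset H^1(\Omega) \hookrightarrow H^s(\Omega)$, the Gelfand triple identifies $\langle u,v\rangle_{L_2(\Omega)}$ with the $H^{-s}$-$H^s$ duality pairing, so $|(\widetilde{D}_\tria u)(v)| \le \|u\|_{H^{-s}(\Omega)}\|v\|_{H^s(\Omega)}$. Everything then reduces to proving a uniform inf-sup bound
\[
\inf_{0\ne u \in \Ss_\tria}\sup_{0\ne v \in \widetilde{\Ss}_\tria} \frac{\langle u,v\rangle_{L_2(\Omega)}}{\|u\|_{H^{-s}(\Omega)}\|v\|_{H^s(\Omega)}} \gtrsim 1.
\]
I would establish this by the standard Fortin argument: construct a uniformly bounded $P_\tria \in \cL(H^s(\Omega),\widetilde{\Ss}_\tria)$ with the Fortin property $\langle u, v - P_\tria v\rangle_{L_2(\Omega)} = 0$ for all $u \in \Ss_\tria$, $v \in H^s(\Omega)$. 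Once $P_\tria$ is at hand, taking the supremum over $v = P_\tria w$ with $w\in H^s(\Omega)$ collapses the restricted supremum to the full one and yields the lower bound $1/\|P_\tria\|_{\cL(H^s(\Omega),H^s(\Omega))}$.

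The biorthogonality in \eqref{properties} dictates the natural candidate
\[
P_\tria v := \sum_{\nu \in N_\tria} \frac{\langle v,\phi_\nu\rangle_{L_2(\Omega)}}{\langle \1,\phi_\nu\rangle_{L_2(\Omega)}}\,\widetilde{\phi}_\nu.
\]
The Fortin property is immediate: expanding $u=\sum_\nu c_\nu \phi_\nu$ and using the biorthogonality, $\langle u,P_\tria v\rangle_{L_2(\Omega)} = \sum_\nu c_\nu \langle\phi_\nu,v\rangle_{L_2(\Omega)} = \langle u,v\rangle_{L_2(\Omega)}$. Moreover, the partition of unity $\sum_\nu \widetilde{\phi}_\nu = \1_\Omega$ together with $\langle \1,\phi_\nu\rangle_{L_2(\Omega)} = \langle \phi_\nu,\1\rangle_{L_2(\Omega)}$ gives $P_\tria \1 = \1$, i.e.\ $P_\tria$ reproduces constants.

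The main technical step will be showing that $P_\tria$ is uniformly bounded as a map $H^s(\Omega) \to H^s(\Omega)$. I would do this by proving bounds separately on $L_2(\Omega)$ and $H^1(\Omega)$ and then invoking interpolation. On $L_2(\Omega)$, the local support condition $\supp\widetilde{\phi}_\nu \subseteq \supp\phi_\nu$ plus shape regularity yield finite overlap, so
\[
\|P_\tria v\|_{L_2(\Omega)}^2 \lesssim \sum_\nu |d_\nu(v)|^2 \|\widetilde{\phi}_\nu\|_{L_2(\Omega)}^2,
\]
and one bounds $|d_\nu(v)| \lesssim \|\phi_\nu\|_{L_2(\Omega)}\|v\|_{L_2(\omega_\nu)}/\langle \1,\phi_\nu\rangle_{L_2(\Omega)}$ with $\|\widetilde{\phi}_\nu\|_{L_2(\Omega)}\lesssim \|\phi_\nu\|_{L_2(\Omega)} \eqsim h_\nu^{d/2}$ and $\langle \1,\phi_\nu\rangle_{L_2(\Omega)}\eqsim h_\nu^d$, which collapses to $\|P_\tria v\|_{L_2(\Omega)}\lesssim \|v\|_{L_2(\Omega)}$ upon summation. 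On $H^1(\Omega)$, one proceeds element-wise: since $P_\tria$ reproduces constants, on each $T\in\tria$ one may replace $v$ by $v-c_T$ for a suitable patch average $c_T$, apply the same localisation now using $\|\widetilde{\phi}_\nu\|_{H^1(\Omega)}\lesssim \|\phi_\nu\|_{H^1(\Omega)}\eqsim h_\nu^{(d-2)/2}$, and absorb the factor $h_T^{-1}$ that emerges by Poincaré's inequality $\|v-c_T\|_{L_2(\omega_T)}\lesssim h_T\|v\|_{H^1(\omega_T)}$. Summing over $T$ and using the bounded overlap of patches gives $\|P_\tria v\|_{H^1(\Omega)}\lesssim \|v\|_{H^1(\Omega)}$. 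This Poincaré-plus-scaling step is the main obstacle; everything else is bookkeeping on the basis. Interpolating between $s=0$ and $s=1$ finishes the proof.
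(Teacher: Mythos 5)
Your proposal follows essentially the same route as the paper: both construct the biorthogonal Fortin projector $P_\tria v = \sum_\nu \langle v,\phi_\nu\rangle_{L_2}/\langle\widetilde\phi_\nu,\phi_\nu\rangle_{L_2}\,\widetilde\phi_\nu$ (which by~\eqref{properties} equals your formula), establish local $L_2$- and $H^1$-stability using the scaling/support/partition-of-unity assumptions together with a first-order approximation argument (the paper via Bramble--Hilbert on $\identity-P_\tria$, you via Poincar\'e after subtracting a patch constant---equivalent bookkeeping), interpolate to $H^s$, and close with the standard Fortin inf-sup argument $\sup_{0\neq v\in\widetilde\Ss_\tria}(\cdots)\geq \|P_\tria\|^{-1}_{\cL(H^s,H^s)}$. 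The argument is correct and matches the paper's proof in all essentials.
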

\begin{proof}
    This proof largely follows~\cite[Sect.~3.1]{249.97}, but because here we consider a Sobolev space $H^s(\Omega)$ that does not incorporate homogeneous boundary conditions, it allows for an easier proof.

     From the assumptions~\eqref{properties}, it follows that the biorthogonal `Fortin' projector $P_\tria \colon L_2(\Omega) \to H^1(\Omega)$ onto $\widetilde{\Ss}_{\tria}$ with $\ran (\identity - P_\tria) = \Ss_{\tria}^{\perp_{L_2(\Omega)}}$ exists, and is given by
    \[
        P_\tria u = \sum_{\nu \in N_\tria} \frac{\langle u, \phi_\nu\rangle_{L_2(\Omega)}}{\langle \widetilde{\phi}_\nu, \phi_\nu \rangle_{L_2(\Omega)}} \widetilde{\phi}_\nu.
    \]
Let $T \in \tria$, by~\eqref{properties} and the fact that $\langle \1, \phi_\nu \rangle_{L_2(\Omega)} \eqsim \|\phi_\nu\|_{L_2(\Omega)}^2$, we find
  for  $k \in \{0,1\}$
    \begin{equation}\label{eq:fortin}
        \|P_\tria u\|_{H^k(T)} \lesssim \sum_{\nu \in N_T}\frac{\|\widetilde{\phi}_\nu\|_{H^k(T)}}{\|\phi_\nu\|_{L_2(\Omega)}} \|u\|_{L_2(\supp \phi_\nu)} \lesssim h_T^{-k} \|u\|_{L_2(\omega_\tria(T))},
    \end{equation}
    with $\omega_\tria(T) := \bigcup_{\{\nu \in N_T\}} \supp \phi_\nu$. This shows $\sup_{\tria \in \bbT} \|P_\tria\|_{\cL(L_2(\Omega), L_2(\Omega))} < \infty$.

     From the above inequality, and $\sum_{\nu \in N_\tria} \widetilde{\phi}_\nu = \1$, we deduce that
     \begin{align*}
         \|(\identity - P_\tria)u\|_{H^1(T)} &= \inf_{p \in \cP_0} \| (\identity - P_\tria)(u-p)\|_{H^1(T)}\\
                                             &\lesssim \inf_{p \in \cP_0} \|u - p\|_{H^1(T)} + h_T^{-1} \|u-p\|_{L_2(\omega_\tria(T))}\\
                                             &\lesssim \inf_{p \in \cP_0} h_T^{-1} \|u-p\|_{L_2(\omega_\tria(T))} + |u|_{H^1(T)}\\
                                             &\lesssim |u|_{H^1(\omega_\tria(T))},
        \end{align*}
        with the last step following from the Bramble-Hilbert lemma. We conclude that $\sup_{\tria \in \bbT} \|P_\tria\|_{\cL(H^1(\Omega), H^1(\Omega))} < \infty$, and consequently by the Riesz-Thorin interpolation theorem, that
        \[
            \sup_{\tria \in \bbT} \| P_\tria\|_{\cL(H^s(\Omega), H^s(\Omega))} < \infty.
        \]

        This latter property guarantees that $\widetilde{D}_\tria$ is uniformly boundedly invertible:
        \begin{align*}
            \| \widetilde{D}_\tria \|_{\cL(\Ss_{\tria\!, -s}, (\widetilde{\Ss}_{\tria\!, s})')} &= \sup_{0 \ne u \in \Ss_{\tria\!, -s}}  \sup_{0 \ne v \in \widetilde{\Ss}_{\tria\!, s}} \frac{\langle u, v\rangle_{L_2(\Omega)}}{\| u\|_{H^{-s}(\Omega)} \|v\|_{H^s(\Omega)}} \leq 1, \\
            \| \widetilde{D}_\tria^{-1}\|^{-1}_{\cL((\widetilde{\Ss}_{\tria\!, s})', \Ss_{\tria\!, -s})} &= \inf_{0 \ne u \in \Ss_{\tria\!, -s}}  \sup_{0 \ne v \in \widetilde{\Ss}_{\tria\!, s}}            \frac{\langle u, v \rangle_{L_2(\Omega)}}{\|u\|_{H^{-s}(\Omega)} \|v\|_{H^{s}(\Omega)}}\\
                                                                                                         &= \inf_{0 \ne u \in \Ss_{\tria\!, -s}}  \sup_{0 \ne v \in H^s(\Omega)}            \frac{\langle u, v \rangle_{L_2(\Omega)}}{\|u\|_{H^{-s}(\Omega)}\|P_\tria v\|_{H^s(\Omega)}} \\
                                                                                       &\geq \|P_\tria\|^{-1}_{\cL(H^s(\Omega), H^s(\Omega))}.\qedhere
        \end{align*}
    \end{proof}

\begin{theorem}\label{thm:bijection}
For $I_\tria \colon \Ss_{\tria\!, s} \to \widetilde{\Ss}_{\tria\!, s}$ being the bijection given by $I_\tria \phi_\nu = \widetilde{\phi}_\nu$ $(\nu \in N_\tria)$, it holds that $I_\tria \in \Lis(\Ss_{\tria\!,s}, \widetilde{\Ss}_{\tria\!,s})$ uniformly in $\tria \in \bbT$.
\end{theorem}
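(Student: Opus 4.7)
My plan is to imitate the proof of Theorem~\ref{thm:fortin}: extend $I_\tria$ from $\Ss_\tria$ to a Fortin-type operator $\bar I_\tria$ defined on all of $L_2(\Omega)$, show $\bar I_\tria$ is uniformly bounded in both the $L_2$- and $H^1$-norms, and conclude uniform $H^s$-boundedness by the Riesz-Thorin interpolation theorem. A symmetric extension $\bar J_\tria$ of $I_\tria^{-1}$ will handle the inverse direction.

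Concretely, I take
\[
    \bar I_\tria u := \sum_{\nu \in N_\tria} \frac{\langle u, \widetilde\phi_\nu\rangle_{L_2(\Omega)}}{\langle \widetilde\phi_\nu, \phi_\nu\rangle_{L_2(\Omega)}} \widetilde\phi_\nu.
\]
Biorthogonality immediately yields $\bar I_\tria \phi_\mu = \widetilde\phi_\mu$, so $\bar I_\tria|_{\Ss_\tria} = I_\tria$. Next I verify that $\bar I_\tria$ preserves constants: combining biorthogonality with $\sum_\mu \phi_\mu = \1$ gives $\int_\Omega \widetilde\phi_\nu = \sum_\mu \langle \widetilde\phi_\nu, \phi_\mu\rangle_{L_2(\Omega)} = \langle \widetilde\phi_\nu, \phi_\nu\rangle_{L_2(\Omega)} = \int_\Omega \phi_\nu$, whence $\bar I_\tria \1 = \sum_\nu \widetilde\phi_\nu = \1$. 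Boundedness then follows from local estimates parallel to~\eqref{eq:fortin}: using $\langle \widetilde\phi_\nu, \phi_\nu\rangle_{L_2(\Omega)} \eqsim \|\phi_\nu\|_{L_2(\Omega)}^2$ together with the assumptions $\|\widetilde\phi_\nu\|_{H^k(T)} \lesssim \|\phi_\nu\|_{H^k(T)}$ and $\supp\widetilde\phi_\nu \subseteq \supp\phi_\nu$, I obtain $\|\bar I_\tria u\|_{H^k(T)} \lesssim h_T^{-k} \|u\|_{L_2(\omega_\tria(T))}$ for $k \in \{0, 1\}$. Summing the $k=0$ case gives uniform $L_2$-boundedness; for the $H^1$-bound I apply the Bramble-Hilbert argument to $(\identity - \bar I_\tria) u$ exactly as done for $(\identity - P_\tria) u$ in the proof of Theorem~\ref{thm:fortin}, which is valid thanks to constant-preservation. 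Riesz-Thorin then yields $\sup_\tria \|\bar I_\tria\|_{\cL(H^s(\Omega), H^s(\Omega))} < \infty$, and since $\bar I_\tria|_{\Ss_\tria} = I_\tria$ this gives the desired uniform bound on $\|I_\tria\|_{\cL(\Ss_{\tria,s}, \widetilde\Ss_{\tria,s})}$.

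For the inverse I symmetrize the construction: set
\[
    \bar J_\tria u := \sum_{\nu \in N_\tria} \frac{\langle u, \phi_\nu\rangle_{L_2(\Omega)}}{\langle \widetilde\phi_\nu, \phi_\nu\rangle_{L_2(\Omega)}} \phi_\nu.
\]
By biorthogonality $\bar J_\tria|_{\widetilde\Ss_\tria} = I_\tria^{-1}$, and $\bar J_\tria \1 = \1$ is immediate from $\langle \widetilde\phi_\nu, \phi_\nu\rangle_{L_2(\Omega)} = \langle \1, \phi_\nu\rangle_{L_2(\Omega)}$ combined with $\sum_\nu \phi_\nu = \1$. The same local estimates and the Bramble-Hilbert/Riesz-Thorin argument then show $\bar J_\tria$ uniformly bounded $H^s \to H^s$, hence $I_\tria^{-1}$ uniformly bounded $\widetilde\Ss_{\tria, s} \to \Ss_{\tria, s}$. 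The main non-routine point in the whole proof is the identification of these two biorthogonal Fortin-type extensions of $I_\tria$ and of $I_\tria^{-1}$; once this is done, everything reduces to calculations already carried out in the proof of Theorem~\ref{thm:fortin}.
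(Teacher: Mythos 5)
Your proof is correct and follows essentially the same route as the paper: the paper likewise writes $I_\tria u = \sum_\nu \langle u, \widetilde\phi_\nu\rangle / \langle\phi_\nu,\widetilde\phi_\nu\rangle\,\widetilde\phi_\nu$ (a formula that is meaningful for all $u\in L_2(\Omega)$, i.e.\ implicitly the extension you call $\bar I_\tria$), establishes the local bound $\|I_\tria u\|_{H^k(T)}\lesssim h_T^{-k}\|u\|_{L_2(\omega_\tria(T))}$, and then invokes constant-preservation $I_\tria\1=\1$ together with the Bramble--Hilbert/Riesz--Thorin argument from Theorem~\ref{thm:fortin}, with the inverse handled symmetrically. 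You merely spell out a few steps the paper leaves implicit (the explicit extension, and the verification $\int_\Omega\widetilde\phi_\nu=\int_\Omega\phi_\nu$, which is also immediate from $\1\in\Ss_\tria$ and $\sum_\nu\widetilde\phi_\nu=\1$).
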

\begin{proof}
    Note that we may write
    \[
        I_\tria u = \sum_{\nu \in N_\tria} \frac{\langle u, \widetilde{\phi}_{\nu}\rangle_{L_2(\Omega)}}{\langle \phi_\nu, \widetilde{\phi}_\nu \rangle_{L_2(\Omega)}} \widetilde{\phi}_\nu \quad\text{and}\quad
        I^{-1}_\tria u = \sum_{\nu \in N_\tria} \frac{\langle u, \phi_{\nu}\rangle_{L_2(\Omega)}}{\langle \widetilde{\phi}_\nu, \phi_\nu \rangle_{L_2(\Omega)}} \phi_\nu.
    \]
    Equivalently to~\eqref{eq:fortin}, we see for $k \in \{0,1\}$ that
    \[
        \|I_\tria u\|_{H^k(T)} \lesssim \sum_{\nu \in N_T}\frac{\|\widetilde{\phi}_\nu\|_{H^k(T)}\|\widetilde{\phi}_\nu\|_{L_2(\Omega)}}{\|\phi_\nu\|^2_{L_2(\Omega)}} \|u\|_{L_2(\supp \phi_\nu)} \lesssim h_T^{-k} \|u\|_{L_2(\omega_\tria(T))}.
    \]
    Following the same arguments as in the proof of Theorem~\ref{thm:fortin}, using that $I_\tria \1 = \1$, then reveals that $I_\tria$ is uniformly bounded. Uniformly boundedness of $I_\tria^{-1}$ follows similarly.
\end{proof}

As announced earlier, we define $D_\tria \in \cL(\Ss_{\tria\!, -s}, (\Ss_{\tria\!, s})')$ by $D_\tria := I_\tria' \widetilde{D}_\tria$, so $(D_\tria u)(v) := \langle u, I_\tria v\rangle_{L_2(\Omega)}$ ($u,v \in \Ss_{\tria}$). Combining the previous theorems gives the following
corollary.
\begin{corollary}\label{cor}
    The operator $D_\tria$ is in $\Lis(\Ss_{\tria\!, -s}, (\Ss_{\tria\!, s})')$ uniformly in $\tria \in \bbT$.
\end{corollary}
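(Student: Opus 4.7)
The corollary is essentially a formal composition statement, so the plan is simply to chain Theorems~\ref{thm:fortin} and~\ref{thm:bijection} through the factorization $D_\tria = I_\tria' \widetilde{D}_\tria$ and to observe that taking the Banach-space adjoint preserves (uniform) bounded invertibility.

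First, I would note that Theorem~\ref{thm:bijection} gives $I_\tria \in \Lis(\Ss_{\tria\!,s}, \widetilde{\Ss}_{\tria\!,s})$ uniformly in $\tria$. Dualizing, and using that $\|T'\|_{\cL(\ZZ',\Y')} = \|T\|_{\cL(\Y,\ZZ)}$ for any $T \in \cL(\Y,\ZZ)$ together with $(T^{-1})' = (T')^{-1}$ whenever $T$ is invertible, one obtains $I_\tria' \in \Lis((\widetilde{\Ss}_{\tria\!,s})', (\Ss_{\tria\!,s})')$ uniformly in $\tria$. Next, Theorem~\ref{thm:fortin} supplies $\widetilde{D}_\tria \in \Lis(\Ss_{\tria\!,-s}, (\widetilde{\Ss}_{\tria\!,s})')$ uniformly.

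The composition $D_\tria = I_\tria' \widetilde{D}_\tria$ is then a composition of two uniform linear isomorphisms between matching spaces, and therefore lies in $\Lis(\Ss_{\tria\!,-s},(\Ss_{\tria\!,s})')$ with
\[
\|D_\tria\|_{\cL(\Ss_{\tria\!,-s},(\Ss_{\tria\!,s})')} \leq \|I_\tria'\|\,\|\widetilde{D}_\tria\|, \qquad \|D_\tria^{-1}\| \leq \|\widetilde{D}_\tria^{-1}\|\,\|(I_\tria')^{-1}\|,
\]
each factor on the right being bounded uniformly in $\tria \in \bbT$ by the two preceding theorems.

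There is really no substantial obstacle here; the only thing to make explicit is the identification $(D_\tria u)(v) = (\widetilde{D}_\tria u)(I_\tria v) = \langle u, I_\tria v\rangle_{L_2(\Omega)}$ that justifies writing $D_\tria = I_\tria'\widetilde{D}_\tria$ as a map into $(\Ss_{\tria\!,s})'$, after which the result is immediate from the product/adjoint inequalities above.
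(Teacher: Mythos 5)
Your proposal is correct and is precisely the argument the paper intends: the corollary is stated immediately after "Combining the previous theorems gives the following corollary," and your chaining of Theorem~\ref{thm:bijection} (dualized) with Theorem~\ref{thm:fortin} via $D_\tria = I_\tria'\widetilde{D}_\tria$ is exactly that combination, made explicit.
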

\begin{remark}
The matrix representation of $D_\tria$ w.r.t.~$\Phi_\tria$ given by
\[
    \bm{D}_\tria = \langle \Phi_\tria, I_\tria \Phi_\tria \rangle_{L_2(\Omega)} = \diag\{ \langle \1,  \phi_\nu\rangle_{L_2(\Omega)} : \nu \in N_\tria \},
\]
which is \emph{diagonal} and therefore easily invertible. The matrix $\bm{D}_\tria$ is known as the lumped mass matrix.
\end{remark}
\begin{remark}
    The operator $D_\tria$ depends merely on the \emph{existence} of a biorthogonal basis $\widetilde{\Phi}_\tria$ that satisfies~\eqref{properties}. Indeed, this basis
    does not appear in the implementation of $\bm{D}_\tria$.
\end{remark}

A possible construction of $\widetilde{\Phi}_\tria$ can be given using techniques from~\cite{249.97}.
Consider some collection of local `bubble' functions $\Theta_\tria = \{\theta_\nu \in H^1(\Omega) : \nu \in N_\tria\}$ that satisfy:
$\big| \langle \theta_\nu, \phi_{\nu'} \rangle_{L_2(\Omega)}\big| \eqsim \delta_{\nu \nu'} \|\phi_\nu\|^2_{L_2(\Omega)}$, $\|\theta_\nu\|_{H^k(\Omega)} \lesssim \| \phi_\nu\|_{H^k(\Omega)}$ $(k\in \{0,1\})$, and $\supp \theta_\nu \subseteq \supp \phi_\nu$.
Existence of such a collection can be shown by a construction on a reference $d$-simplex, and then using an affine bijection to transfer it to general elements, see~\cite[Sect.~4.1]{249.97}.
A suitable $\widetilde{\Phi}_\tria$ that satisfies~\eqref{properties} is then given by
\[
    \widetilde{\phi}_\nu := \phi_\nu + \frac{\langle \1, \phi_\nu \rangle_{L_2(\Omega)}}{\langle \theta_\nu, \phi_\nu\rangle_{L_2(\Omega)}}\theta_\nu - \sum_{\nu' \in N_\tria} \frac{\langle \phi_\nu, \phi_{\nu'}\rangle_{L_2(\Omega)}}{\langle \theta_{\nu'}, \phi_{\nu'} \rangle_{L_2(\Omega)}} \theta_{\nu'}.
\]

We emphasize that the construction of a uniform preconditioner outlined in the subsection does require any assumptions on the mesh grading.
\subsubsection{Implementation}\label{sec:implementation}
Taking $\Phi_\tria$ as basis for both $\Ss_{\tria\!, -s}$ and $\Ss_{\tria\!, s}$, the \emph{matrix representation} of the preconditioned systems from~\eqref{eq:precondcont} read as
\[
    \bm{D}_\tria^{-1} \bm{B}_\tria \bm{D}_\tria^{-\top} \bm{A}_\tria \quad \text{and}\quad \bm{D}_\tria^{-\top}\bm{A}_\tria \bm{D}_\tria^{-1} \bm{B}_\tria,
\]
where
\begin{align*}
    \bm{A}_\tria &:= (A_\tria \Phi_\tria)(\Phi_\tria),\quad \bm{B}_\tria:=(B_\tria \Phi_\tria)(\Phi_\tria),\\
    \bm{D}_\tria &:= (D_\tria \Phi_\tria)(\Phi_\tria) = \diag\{ \langle \1,  \phi_\nu\rangle_{L_2(\Omega)} : \nu \in N_\tria \}.
\end{align*}

Alternatively, we could equip the spaces with the \emph{scaled} nodal basis $\breve{\Phi}_\tria := \bm{D}^{-\frac12}_\tria \Phi_\tria$,
so that the $L_2(\Omega)$-norm of any basis function is proportional to $1$, yielding
\begin{align*}
    \bm{\breve{A}}_\tria&:=(A_\tria \breve{\Phi}_\tria)(\breve{\Phi}_\tria)=(\bm{D}^{-\frac12}_\tria)^\top \bm{A}_\tria \bm{D}_\tria^{-\frac12},\\
\bm{\breve{B}}_\tria&:=(B_\tria \breve{\Phi}_\tria)(\breve{\Phi}_\tria)=(\bm{D}^{-\frac12}_\tria)^\top \bm{B}_\tria \bm{D}_\tria^{-\frac12},\\
    \bm{\breve{D}}_\tria&:=(D_\tria \breve{\Phi}_\tria)(\breve{\Phi}_\tria)= (\bm{D}^{-\frac12}_\tria)^\top \bm{D}_\tria \bm{D}_\tria^{-\frac12} = \bm{\identity},
\end{align*}
showing that $\bm{\breve{B}}_\tria$ is a uniform preconditioner for $\bm{\breve{A}}_\tria$ (and vice versa).
To the best of our knowledge, so far this most easy form of operator preconditioning, where the stiffness matrix of some operator
w.r.t.~some basis is preconditioned by stiffness matrix of an opposite order operator w.r.t.~the same basis, has not been shown to be optimal.

\section{Manifold case}
Let $\Gamma$ be a compact
$d$-dimensional Lipschitz, piecewise smooth manifold in $\R^{d'}$ for some $d' \geq d$ without boundary $\partial\Gamma$.
For $s \in [0,1]$, we consider the Sobolev spaces
\[
     H^s(\Gamma):=[L_2(\Gamma),H^1(\Gamma)]_{s,2},\quad H^{-s}(\Gamma) := H^s(\Gamma)'.
\]
 We assume that $\Gamma$ is given as the closure of the disjoint union of $\cup_{i=1}^p \chi_i(\Omega_i)$, with, for $1 \leq i \leq p$,
$\chi_i\colon \R^d \rightarrow \R^{d'}$ being some smooth regular parametrization, and $\Omega_i \subset \R^d$ an open polytope.
 W.l.o.g.~assuming that for $i \neq j$, $\overline{\Omega}_i \cap \overline{\Omega}_j=\emptyset$, we define
 \[
 \chi\colon \Omega:=\cup_{i=1}^p \Omega_i \rightarrow \cup_{i=1}^p \chi_i(\Omega_i) \text{ by } \chi|_{\Omega_i}=\chi_i.
 \]

Let $\bbT$ be a family of conforming partitions $\tria$ of $\Gamma$ into `panels' such that, for $1 \leq i \leq p$,  $\chi^{-1}(\tria) \cap \Omega_i$ is a uniformly shape regular conforming partition of $\Omega_i$ into $d$-simplices (that for $d=1$ satisfies a uniform $K$-mesh property).

Fix $\ell > 0$, we set
\[
\Ss_{\tria}:=\{u \in H^1(\Gamma)\colon u \circ \chi |_{\chi^{-1}(T)} \in \cP_\ell \,\,(T \in \tria)\},
\]
equipped with the canonical nodal basis $\Phi_\tria=\{\phi_\nu: \nu \in N_\tria\}$.

For construction of an operator $D_\tria \in \Lis(\Ss_{\tria\!, -s}, (\Ss_{\tria\!, s})')$ one can proceed as in the domain case.
A suitable collection $\widetilde{\Phi}_\tria$ that is $L_2(\Gamma)$-biorthogonal to $\Phi_\tria$ exists.
Moreover, the analysis from the domain case applies verbatim by only changing $\langle \cdot, \cdot \rangle_{L_2(\Omega)}$ into $\langle\cdot, \cdot, \rangle_{L_2(\Gamma)}$.
A \emph{hidden problem}, however, is that the computation of $\bm{D}_\tria = \diag \{\langle \1, \phi_\nu \rangle_{L_2(\Gamma)} : \nu \in N_\tria\}$ involves integrals over $\Gamma$ that generally have to be approximated using numerical quadrature.

In~\cite{249.97} we solved this issue by defining an additional `mesh-dependent' scalar product
\[
 \langle u,v\rangle_{\tria}:=\sum_{T \in \tria} \frac{|T|}{|\chi^{-1}(T)|} \int_{\chi^{-1}(T)} u(\chi(x))v(\chi(x))dx.
\]
This is constructed by replacing on each $\chi^{-1}(T)$, the Jacobian $|\partial \chi|$ by its average $\frac{|T|}{|\chi^{-1}(T)|}$ over $\chi^{-1}(T)$.

By considering $\widetilde{\Phi}_\tria$ that is biorthogonal to $\Phi_\tria$ with respect to $\langle \cdot, \cdot \rangle_\tria$, and the linear bijection $I_\tria$ given by $I_\tria \phi_{ \nu} = \widetilde{\phi}_{\nu}$, one
is able show that the operator $D_\tria$ defined as $(D_\tria u)(v) := \langle u, I_\tria v \rangle_\tria$ satisfies the necessary requirements.
For details we refer to~\cite{249.97}. The resulting matrix representation of $D_\tria$ w.r.t.~$\Phi_\tria$ is then given by
$\bm{D}_\tria = \diag\{ \langle \1,  \phi_\nu\rangle_{\tria} \colon \nu \in N_\tria \}$.

\section{Numerical results}
Let $\Gamma = \partial [0,1]^3 \subset \R^3$ be the two-dimensional manifold without boundary given as the boundary of the unit cube, $s = \frac12$, and $\Ss_{\tria}$ the space
of continuous piecewise polynomials of degree $\ell$ w.r.t.~a partition $\tria$. We will evaluate preconditioning of the discretized Single Layer Integral operator $A_\tria \in \cLis(\Ss_{\tria\!, -s}, (\Ss_{\tria\!, -s})')$  and an (essentially) discretized Hypersingular Integral operator $B_\tria \in \cLis(\Ss_{\tria\!, s}, (\Ss_{\tria\!, s})')$.

The Hypersingular Integral operator $\tilde B \in \cL(H^{\frac12}(\Gamma), H^{-\frac12}(\Gamma))$, is only-semi coercive, but
solving $\tilde B u=f$ for $f$ with $f(\1)=0$ is equivalent to solving $B u=f$ with
$B$ given by $(Bu)(v) = (\tilde B u)(v) + \alpha \langle u, \1 \rangle_{L_2(\Gamma)} \langle v, \1 \rangle_{L_2(\Gamma)}$, for some fixed $\alpha > 0$. This operator $B$ is in $\cLis(H^{\frac 12}(\Gamma), H^{-\frac 12}(\Gamma))$, and we shall consider discretizations $B_\tria \in \cLis(\Ss_{\tria\!, s}, (\Ss_{\tria\!, s})')$ of $B$. We found $\alpha = 0.05$ to give good results
in our examples.

Equipping both $\Ss_{\tria\!, s}$ and $\Ss_{\tria\!, -s}$ with the standard nodal basis $\Phi_\tria = \{ \phi_\nu : \nu \in N_\tria\}$, the matrix representations of the preconditioned systems from Sect.~\ref{sec:lumping} read as
\[
    \bm{D}_\tria^{-1} \bm{B}_\tria \bm{D}_\tria^{-\top} \bm{A}_\tria \quad \text{and} \quad \bm{D}_\tria^{-\top}\bm{A}_\tria \bm{D}_\tria^{-1} \bm{B}_\tria,
\]
for $\bm{D}_\tria = \diag\{ \langle \1,  \phi_\nu\rangle_{L_2(\Gamma)} \colon \nu \in N_\tria \}, \bm{A}_\tria = (A_\tria \Phi_\tria)(\Phi_\tria)$ and $\bm{B}_\tria:=(B_\tria \Phi_\tria)(\Phi_\tria)$.

We calculated (spectral) condition numbers of these preconditioned systems, where this condition number is given by $\kappa_S(\bm{X}) := \rho(\bm{X})\rho(\bm{X}^{-1})$ with $\rho(\cdot)$
denoting the spectral radius. Note that the condition numbers of the preconditioned systems coincide, i.e.,
\[
\kappa_S(\bm{D}_\tria^{-1} \bm{B}_\tria \bm{D}_\tria^{-\top} \bm{A}_\tria) = \kappa_S(\bm{D}_\tria^{-\top}\bm{A}_\tria \bm{D}_\tria^{-1} \bm{B}_\tria),
\]
so we may restrict ourselves to results for preconditioning of $\bm{A}_\tria$.

We used the BEM++ software package~\cite{249.04} to approximate the matrix representation of $\bm{A}_\tria$ and $\bm{B}_\tria$
by hierarchical matrices based on adaptive cross approximation~\cite{127.7, 19.896}.

As initial partition $\tria_\bot = \tria_1$ of $\Gamma$ we take a conforming partition consisting of $2$ triangles per side, so $12$ triangles in total, with an assignment of the newest vertices
that satisfies the so-called matching condition.  We let $\bbT$ be the sequence $\{\tria_k\}_{k \geq 1}$ where the (conforming) partition $\tria_k$ is found by applying both uniform and local refinements. To be precise,
$\tria_k$ is constructed by first applying $k$ uniform bisections to $\tria_\bot$, and then $4k$ local refinements by repeatedly applying NVB to all triangles that touch a corner of the cube.

\subsection{Comparison preconditioners}
Write $\bm{G}_\tria^{D} := \bm{D}_\tria^{-1} \bm{B}_\tria \bm{D}_\tria^{-\top}$ for the preconditioner constructed in Sect.~\ref{sec:lumping}.  We will compare this with
 the preconditioner described in Sect.~\ref{sec:mass}, for which the matrix representation is given by $\bm{G}_\tria^{M} := \bm{M}_\tria^{-1} \bm{B}_\tria \bm{M}_\tria^{-\top}$ with mass matrix $\bm{M}_\tria = \langle \Phi_\tria, \Phi_\tria\rangle_{L_2(\Gamma)}$.
 Because our partitions of the two-dimensional surface are created with NVB, we know that also the latter preconditioner provides uniformly bounded condition numbers.
  In contrast to $\bm{D}_\tria^{-1}$, the inverse $\bm{M}_\tria^{-1}$ cannot be evaluated in linear complexity. We implemented the application of $\bm{M}_\tria^{-1}$ by computing an LU-factorization of $\bm{M}_\tria$.

Table~\ref{tbl:results} compares the spectral condition numbers for the  preconditioned Single Layer systems with trial spaces given by continuous piecewise linears and those by continuous piecewise cubics.
The condition numbers $\kappa_S(\bm{G}_\tria^{D}\bm{A}_\tria)$ are uniformly bounded, but quantitatively the condition numbers $\kappa_S(\bm{G}_\tria^{M}\bm{A}_\tria)$ are better.

    \begin{table}\label{tbl:results}
    \caption{Spectral condition numbers, $\kappa_S(\bm{G}^{\circ}_\tria \bm{A}_\tria)$ for $\circ \in \{D, M\}$, of the preconditioned Single Layer system discretized on $\{\tria_k\}_{k \geq 1}$, by continuous piecewise linears $(\ell = 1)$ in the middle columns and
        discretized by continuous piecewise cubics  $(\ell = 3)$ in the right columns. Here $\bm{G}^{D}_\tria$ is the preconditioner introduced in Sect.~\ref{sec:lumping}, whereas $\bm{G}^{M}_\tria$
    is the preconditioner described in Sect.~\ref{sec:mass} whose application requires an application of $\bm{M}_\tria^{-1}$, which we implemented using an LU-factorization.}
\begin{tabular}{ll|lcc|lcc}
    \multicolumn{2}{c}{Partition $\tria$}&\multicolumn{3}{c}{Linears ($\ell = 1$)} & \multicolumn{3}{c}{Cubics ($\ell = 3$)}\vspace{2pt}\\
\toprule
    $h_{min}$&  $h_{max}$   &    dofs &$\bm{G}^{D}_\tria \bm{A}_\tria$ &   $\bm{G}^{M}_\tria \bm{A}_\tria$ & dofs & $\bm{G}^{D}_\tria \bm{A}_\tria$ &  $\bm{G}^{M}_\tria \bm{A}_\tria$ \\
\midrule
$1.4\cdot 10^{  0}$&  $1.4\cdot 10^{0}  $ & $     8$&   $ 16.2$ & $1.20$ &$   56$ & $90.5$ & $1.68$ \\
$4.4\cdot 10^{- 2}$&  $5.0\cdot 10^{-01}$ & $   218$&   $ 14.9$ & $1.91$ &$ 1946$ & $87.9$ & $2.08$ \\
$1.3\cdot 10^{- 3}$&  $3.5\cdot 10^{-01}$ & $   482$&   $ 14.7$ & $2.04$ &$ 4322$ & $86.1$ & $2.17$ \\
$4.3\cdot 10^{- 5}$&  $1.7\cdot 10^{-01}$ & $   962$&   $ 14.7$ & $2.10$ &$ 8642$ & $85.0$ & $2.21$ \\
$1.3\cdot 10^{- 6}$&  $8.8\cdot 10^{-02}$ & $  2306$&   $ 15.4$ & $2.14$ &$20738$ & $84.9$ & $2.23$ \\
$4.2\cdot 10^{- 8}$&  $4.4\cdot 10^{-02}$ & $  7106$&   $ 15.6$ & $2.16$ &$ 63938$& $84.9$ & $2.24$\\
$1.3\cdot 10^{- 9}$&  $2.2\cdot 10^{-02}$ & $ 25730$&   $ 15.8$ & $2.17$ &$231554$& $84.8$ & $2.25$\\
$4.1\cdot 10^{-11}$&  $1.1\cdot 10^{-02}$ & $ 99650$&   $ 15.8$ & $2.17$ &$896834$& $84.7$ & $2.25$\\
\bottomrule
\end{tabular}
\end{table}

\subsection{Improving the preconditioner quality}
As observed in Table~\ref{tbl:results}, the preconditioner $\bm{G}^{M}_\tria$ appears to be of superior quality, but it has unfavourable computational complexity.
It does suggest a way for improving $\bm{G}^{D}_\tria$: by replacing $\bm{D}_\tria^{-1}$ with a better approximation of $\bm{M}_\tria^{-1}$, one may
hope to improve the quality. To this end, we introduce damped (preconditioned) Richardson. Let $0 < \lambda_{-} \leq \lambda_{min}(\bm{D}_\tria^{-1} \bm{M}_\tria)$, $\lambda_{max}(\bm{D}_\tria^{-1} \bm{M}_\tria) \leq \lambda_{+}$, $\bm{R}_\tria^{(0)} := 0$ and for $k \geq 0$ define
\[
    \bm{R}_\tria^{(k+1)} := \bm{R}_\tria^{(k)} + \omega \bm{D}_\tria^{-1}(\identity - \bm{M}_\tria \bm{R}_\tria^{(k)}), \quad \omega = \frac{2}{\lambda_{-} + \lambda_{+}},
\]
being the result of $k$ Richardson iterations. Correspondingly define
\begin{equation}\label{eq:richardson}
    \bm{G}_\tria^{(k)} := \bm{R}_\tria^{(k)} \bm{B}_\tria \bm{R}_\tria^{(k)}.
\end{equation}
It follows that $\bm{G}_\tria^{(1)} = \bm{G}^{D}_\tria$ and $\lim_{k \to \infty} \bm{G}_\tria^{(k)} = \bm{G}^M_\tria$.  Although we have no proof, we
suspect that $\bm{G}_\tria^{(k)}$ provides a uniform preconditioner for $\bm{A}_\tria$ due to the fact that $\bm{R}_\tria^{(k)}$ approximates $\bm{M}_\tria^{-1}$, while preserving constant functions, being a key ingredient in the proofs of Theorems~\ref{thm:fortin} and~\ref{thm:bijection}.

Values for $\lambda_{-}$ and $\lambda_{+}$ can be found by calculating the extremal eigenvalues of the corresponding preconditioned mass matrix on a reference simplex, see e.g.~\cite{316.51}. For $\ell = 1$ this gives $\omega=\frac{2(d+2)}{d+3}$, whereas for $\ell = 3$ and $d=2$ we computed $\omega = 0.836$.

Table~\ref{tbl:richardson} compares the condition numbers $\kappa_S(\bm{G}_\tria^{(k)} \bm{A}_\tria)$ for $k \in \{2,4,6\}$. We see that a few Richardson iterations drastically improves our preconditioner, making its quality on par with that of $\bm{G}_\tria^{M}$ while having a favourable linear application cost.

Finally, to show that one cannot simply use any (iterative) method for approximating $\bm{M}_\tria^{-1}$, we consider the case where one approximates this
inverse using a Jacobi preconditioner. The resulting preconditioner is then given by
\begin{equation}\label{eq:jacobi}
    \bm{G}_\tria^J := (\diag {\bm M}_\tria)^{-1} \bm{B}_\tria (\diag {\bm M}_\tria)^{-\top}.
\end{equation}
Table~\ref{tbl:jacobi} clearly displays that this is not a uniformly bounded preconditioner, which we assume is due to the fact that $(\diag {\bm M}_\tria)^{-1}$ does
not preserve constant functions for $\ell > 1$.

\begin{table}\label{tbl:richardson}
\centering
\caption{Spectral condition numbers $\kappa_S(\bm{G}_\tria^{(k)} \bm{A}_\tria)$ with $\bm{G}_\tria^{(k)}$ the preconditioner from~\eqref{eq:richardson} that incorporates $k$ Richardson iterations. The systems are discretized by continuous piecewise linears in the left columns and discretized by continuous piecewise cubics in the right columns.  }
\begin{tabular}{lccc|lccc}
\multicolumn{4}{c}{Linears  ($\ell = 1$)} & \multicolumn{4}{c}{Cubics ($\ell = 3$)}\vspace{2pt}\\
\toprule
dofs &  $k=2$ &  $k=4$ & $k=6$ &   dofs &  $k=2$ &  $k=4$ &  $k=6$   \\
\midrule
$     8 $&$       2.26 $&$       1.29 $&$       1.22$&$    56 $&$  10.1 $&$   3.99 $&$ 2.65$\\
$   218 $&$       3.05 $&$       2.07 $&$       1.94$&$  1946 $&$  8.96 $&$   3.57 $&$ 2.52$\\
$   482 $&$       3.53 $&$       2.28 $&$       2.08$&$  4322 $&$  8.80 $&$   3.59 $&$ 2.52$\\
$   962 $&$       3.79 $&$       2.44 $&$       2.19$&$  8642 $&$  8.63 $&$   3.59 $&$ 2.52$\\
$  2306 $&$       3.98 $&$       2.52 $&$       2.24$&$ 20738 $&$  8.54 $&$   3.59 $&$ 2.52$\\
$  7106 $&$       4.18 $&$       2.57 $&$       2.27$&$  63938$&$  8.54 $&$   3.59 $&$ 2.52$\\
$ 25730 $&$       4.35 $&$       2.61 $&$       2.28$&$ 231554$&$  8.54 $&$   3.59 $&$ 2.52$\\
$ 99650 $&$       4.47 $&$       2.65 $&$       2.29$&$ 896834$&$  8.54 $&$   3.59 $&$ 2.52$\\
\bottomrule
\end{tabular}
\end{table}

\begin{table}\label{tbl:jacobi}
\caption{Spectral condition numbers $\kappa_S(\bm{G}_\tria^J \bm{A}_\tria)$ with $\bm{G}_\tria^J$ from~\eqref{eq:jacobi}, and systems discretized by continuous piecewise cubics $(\ell = 3)$.}
\begin{tabular}{rc}
\toprule
dofs &    $\bm{G}_\tria^{J} \bm{A}_\tria$\\
\midrule
$   56$&$     62.6$\\
$ 1946$&$    377.1$\\
$ 4322$&$    495.6$\\
$ 8642$&$   1016.9$\\
$20738$&$   3067.8$\\
$63938$&$  10928.3$\\
\bottomrule
\end{tabular}
\end{table}

\section{Conclusion}
Considering discretized opposite order operators $\bm{A}_\tria$ and $\bm{B}_\tria$ using the same ansatz space of continuous piecewise polynomial w.r.t.~a possibly locally refined partition $\tria$, we consider matrices $\bm{D}_\tria$ such that
$\bm{D}_\tria^{-1} \bm{B}_\tria \bm{D}_\tria^{-\top} $ is a uniform preconditioner for $\bm{A}_\tria$, and $ \bm{D}_\tria^{-\top}\bm{A}_\tria \bm{D}_\tria^{-1}$ for $\bm{B}_\tria$.
The obvious choice for $\bm{D}_\tria$ would be the mass matrix, however, it yields uniformly bounded condition numbers only under a (mild) grading assumption on the mesh, and more importantly, it has the disadvantage that its inverse is dense.
We proved that when taking $\bm{D}_\tria$ as the lumped mass matrix the condition numbers are uniformly bounded, remarkably without any gradedness assumption on the mesh, while obviously its inverse  can be applied in linear cost.

In our experiments with locally refined meshes generated by Newest Vertex Bisection, the condition numbers with $\bm{D}_\tria$ as the mass matrix are quantitatively better than those found with $\bm{D}_\tria$ as the lumped mass matrix though. Constructing $\bm{D}^{-1}_\tria$ as an approximation for the inverse mass matrix by a few preconditioned damped Richardson steps with the lumped mass matrix as a preconditioner, both the resulting matrix can be applied at linear cost and the observed condition numbers are essentially as good as with the inverse mass matrix.

\newcommand{\etalchar}[1]{$^{#1}$}

\end{document}